\documentclass[11pt]{amsart}
\usepackage{amsfonts,amssymb,amsthm,eucal,amsmath,verbatim}
\allowdisplaybreaks
\topmargin=-.5in \oddsidemargin=0in \evensidemargin=0in \textwidth=6.5in
\textheight=9.0in

\newtheorem{thm}{Theorem}
\newtheorem{cor}[thm]{Corollary}
\newtheorem{lemma}[thm]{Lemma}

\newcommand{\R}{\mathbb{R}}

\newcommand{\E}{\mathbb{E}}

\newcommand{\s}{\mathbb{S}}
\newcommand{\I}{\mathbb{I}}

\newcommand{\sgn}{\mathop{\mathrm{sgn}}}

\newcommand{\inprod}[2]{\left\langle #1, #2 \right\rangle}

\newcommand{\ds}{\displaystyle}
\newcommand{\F}{\mathcal{F}}

\renewcommand{\P}{\mathbb{P}}

\renewcommand{\I}{\mathbb{I}}

\newcommand{\n}{\mathcal{N}}

\begin{document}

\title[Quantitative asymptotics of graphical projection pursuit]{Quantitative asymptotics \\ of graphical projection pursuit}
\author{Elizabeth Meckes}\thanks{Research supported by an American Institute
of Mathematics Five-year Fellowship.}
\address{220 Yost Hall,
  Department of Mathematics, Case Western Reserve University, 10900
  Euclid Ave., Cleveland, OH 44122.}
\email{ese3@cwru.edu}
\urladdr{http://case.edu/artsci/math/esmeckes/}

\begin{abstract}
There is a result of Diaconis and Freedman which says that, in a limiting
sense, for large collections
of high-dimensional data most one-dimensional projections of the data are 
approximately Gaussian.
This paper gives quantitative versions of that result.  For a set of 
deterministic vectors $\{x_i\}_{i=1}^n$ in $\R^d$ with $n$ and $d$ fixed, 
let $\theta\in\s^{d-1}$
be a random point of the sphere and let $\mu_n^\theta$
denote the random measure which puts mass $\frac{1}{n}$ at each of the 
points $\inprod{x_1}{\theta},\ldots,\inprod{x_n}{\theta}$.  For a fixed
bounded Lipschitz test function $f$, $Z$ a standard Gaussian random variable 
and $\sigma^2$ 
a suitable constant, an explicit bound
is derived for the quantity $\ds\P\left[\left|\int f d\mu_n^\theta-\E f(
\sigma Z)\right|>\epsilon\right]$.  A bound is also given for 
$\ds\P\left[d_{BL}(\mu_n^\theta,
\n(0,\sigma^2))>\epsilon\right]$, where $d_{BL}$ denotes the bounded-Lipschitz 
distance, which yields a lower bound on the waiting time to finding a 
non-Gaussian projection of the $\{x_i\}$ if directions are tried independently and uniformly
on $\s^{d-1}$.
\end{abstract}

\maketitle

\section{Introduction}
A foundational tool of data analysis is the projection of high-dimensional
data to a one- or two-dimensional subspace in order to visually represent the
data, and, ideally, identify underlying structure.  The question immediately
arises: which projections are interesting?  One would like to answer by 
saying that those projections which exhibit structure are interesting, however,
identifying which projections those are is not quite as straightforward as
one might think.  In particular, there are several reasons that have led to 
the idea that one should mainly look for projections which are far from
Gaussian in behavior; that Gaussian projections in fact do not generally
exhibit interesting structure.  One justification for this idea is the 
following result due to Persi Diaconis and David Freedman.
\begin{thm}[Diaconis-Freedman \cite{DiaFre}]\label{limit}
Let $x_1,\ldots,x_n$ be deterministic vectors in $\R^d$.  Suppose that
$n$, $d$ and the $x_i$ depend on a hidden index $\nu$, so that as $\nu$
tends to infinity, so do $n$ and $d$.  Suppose that there is a $\sigma^2>0$
such that, for all $\epsilon>0$,
\begin{equation}\label{limlengths}
\frac{1}{n}\Big|\left\{j\le n:\big||x_j|^2-\sigma^2d\big|>\epsilon d\right\}
\Big|\xrightarrow{\nu\to\infty}0,
\end{equation}
and suppose that
\begin{equation}\label{limorths}
\frac{1}{n^2}\Big|\left\{j,k\le n:\big|\inprod{x_j}{x_k}\big|>\epsilon d\right\}
\Big|\xrightarrow{\nu\to\infty}0.
\end{equation}
Let $\theta\in\s^{d-1}$ be distributed uniformly on the sphere, and consider 
the random measure $\mu_\nu^\theta$ which puts mass $\frac{1}{n}$ at 
each of the points $\inprod{\theta}{x_1},\ldots,\inprod{\theta}{x_n}$.  Then
as $\nu$ tends to infinity, the measures $\mu_\nu^\theta$ tend to $\n(0,\sigma^2)$
 weakly in probability.
\end{thm}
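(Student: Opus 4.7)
The plan is to prove that for each fixed bounded Lipschitz $f:\R\to\R$ the random variable $F(\theta) := \int f \, d\mu_\nu^\theta = \frac{1}{n}\sum_{j=1}^n f(\inprod{x_j}{\theta})$ converges in probability to the constant $\E f(\sigma Z)$ as $\nu\to\infty$. Since convergence in probability of $\int f \, d\mu_\nu^\theta$ along a countable determining family of bounded Lipschitz test functions is equivalent to $\mu_\nu^\theta \Rightarrow \n(0,\sigma^2)$ weakly in probability, this is enough. I would establish the limit by separately controlling the bias $\E F(\theta) - \E f(\sigma Z)$ and the variance $\var F(\theta)$, and then combining them via Chebyshev's inequality.

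For the bias, I use that for any nonzero $x\in\R^d$ the random variable $\inprod{x}{\theta}$ is distributed as $|x|\,\theta_1$, a scaling of a single coordinate of the uniform distribution on $\s^{d-1}$; this coordinate is classically close to $\n(0,1/d)$ with an explicit rate in $d$. Consequently $\E f(\inprod{x_j}{\theta})$ is close to $\int f \, d\n(0,|x_j|^2/d)$, and the length hypothesis \eqref{limlengths} together with the continuity of $\sigma^2 \mapsto \int f \, d\n(0,\sigma^2)$ then forces $\E F(\theta) \to \E f(\sigma Z)$ after averaging over $j$.

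For the variance, I would expand
\[
\var F(\theta) = \frac{1}{n^2}\sum_{j,k=1}^n \mathrm{Cov}\bigl(f(\inprod{x_j}{\theta}),\, f(\inprod{x_k}{\theta})\bigr).
\]
The $n$ diagonal terms contribute at most $\|f\|_\infty^2/n = o(1)$. For the off-diagonal pairs I partition $\{(j,k): j\ne k\}$ into a \emph{good} set $G = \{(j,k): |\inprod{x_j}{x_k}| \le \epsilon d\}$ and its complement $B$. A pair in $G$ has joint projection $(\inprod{x_j}{\theta}, \inprod{x_k}{\theta})$ that is approximately bivariate Gaussian with off-diagonal covariance $\inprod{x_j}{x_k}/d = O(\epsilon)$; since Lipschitz functionals of weakly correlated jointly Gaussians have covariance of order $\epsilon\|f\|_{\mathrm{Lip}}^2$ (by Gaussian interpolation in the correlation parameter), each good pair contributes at most $O(\epsilon\|f\|_{\mathrm{Lip}}^2)$. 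A pair in $B$ contributes at most $4\|f\|_\infty^2$, but \eqref{limorths} forces $|B|/n^2\to 0$. Hence $\limsup_\nu \var F(\theta) \lesssim \epsilon\|f\|_{\mathrm{Lip}}^2$ for every $\epsilon>0$, so $\var F(\theta)\to 0$, and Chebyshev finishes the convergence in probability.

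The main obstacle I anticipate is the quantitative two-dimensional Gaussian comparison needed for a good pair. I would handle it by decomposing $\theta = \theta_{\parallel} + \theta_{\perp}$ relative to $V := \mathrm{span}(x_j, x_k)$: the pair $(\inprod{x_j}{\theta}, \inprod{x_k}{\theta})$ is then an explicit linear image of the projection of $\theta$ onto $V$, so the comparison reduces to the known fact that the first two coordinates of a uniform point on $\s^{d-1}$ are close to independent $\n(0,1/d)$ variables, with a rate that can be made explicit. This is exactly the kind of quantitative ingredient the rest of the paper is set up to provide, and it is also what will underlie the sharper tail bound on $|\int f \, d\mu_n^\theta - \E f(\sigma Z)|$ promised in the abstract.
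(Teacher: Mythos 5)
This statement is Theorem \ref{limit}, which the paper does \emph{not} prove: it is stated with a citation to Diaconis and Freedman and used as motivation, so there is no ``paper's own proof'' to compare against directly. Your bias-plus-variance-plus-Chebyshev scheme is in fact the skeleton of the original Diaconis--Freedman argument (they work with the empirical distribution function rather than Lipschitz test functions, but the ``compute the $\theta$-mean and the $\theta$-variance'' decomposition is the same). It is, however, genuinely different from the machinery this paper builds for the quantitative Theorems \ref{testfcn} and \ref{main}: there the annealed estimate $d_{TV}(W,\sigma Z)\le (A+2)/(d-1)$ is obtained by Stein's method of exchangeable pairs (Theorem \ref{cont}), not by classical Gaussian approximation of a sphere coordinate, and the concentration of $F(\theta)=\int f\,d\mu_n^\theta$ around its mean comes not from a second-moment bound but from L\'evy's isoperimetric inequality on $\s^{d-1}$ (Lemma \ref{levy}), using that \eqref{orths} makes $F$ Lipschitz with constant $\sqrt B$. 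L\'evy's lemma gives subgaussian tails $\exp(-c(d-1)\epsilon^2/B)$, which a Chebyshev argument cannot, and this exponential dependence is exactly what drives Theorem \ref{main} and Corollary \ref{wait}. So your closing remark that the bivariate sphere-to-Gaussian comparison is ``the kind of quantitative ingredient the rest of the paper is set up to provide'' is a little off: the paper never makes a bivariate Gaussian comparison and bypasses the variance calculation entirely.

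There is also a small but genuine gap in your variance step. You define the good set by $|\inprod{x_j}{x_k}|\le\epsilon d$ alone, but the error in the bivariate Gaussian approximation of $(\inprod{x_j}{\theta},\inprod{x_k}{\theta})$ scales with the operator norm of the linear map from the projection of $\theta$ onto $\mathrm{span}(x_j,x_k)$ to the pair, i.e.\ with $\max(|x_j|,|x_k|)$. Condition \eqref{limlengths} controls $|x_j|^2$ only for a $1-o(1)$ fraction of indices and gives no uniform bound, so a ``good'' pair in your sense could have $|x_j|$ arbitrarily large, and the coupling error between the projected pair and its Gaussian surrogate need not be small. The repair is easy and in the Diaconis--Freedman spirit: enlarge the bad set to also include any pair for which $\big||x_j|^2-\sigma^2 d\big|>\epsilon d$ or $\big||x_k|^2-\sigma^2 d\big|>\epsilon d$. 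Then \eqref{limlengths} and \eqref{limorths} together still give $|B|/n^2\to 0$, while on the (refined) good set the linear map has operator norm $O(\sqrt d)$, the $L^1$ coupling distance between the projected pair and its Gaussian surrogate is $O(1/\sqrt d)$, and hence the covariance error is $O(\|f\|_\infty\|f\|_{\mathrm{Lip}}/\sqrt d)\to 0$, leaving only the $O(\epsilon\|f\|_{\mathrm{Lip}}^2)$ term you want from Gaussian interpolation. With that amendment the argument closes.
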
 

Heuristically, Theorem \ref{limit} can be interpreted as saying that,
for a large number of high-dimensional data vectors, as long as they have
nearly the same lengths and are nearly orthogonal, most one-dimensional
projections are close to Gaussian regardless of the structure of the data.  
It is important to note that the 
conditions \eqref{limlengths} and \eqref{limorths} are not too strong; in 
particular, even though only $d$ vectors can be exactly orthogonal in $\R^d$,
the $2^d$ vertices of a unit cube centered at the origin satisfy condition 
\eqref{limorths} for
``rough orthogonality''.

A failing of the usual interpretation of Theorem \ref{limit} is that
sometimes, projections of data look nearly Gaussian for a reason; that is,
it is not always due to the central-limit type effect described by the
theorem.  Thus the question arises: is there a way to tell whether a 
Gaussian projection is interesting?  A possible answer lies in 
quantifying the theorem, and then saying that a nearly-Gaussian projection
is interesting if it is ``too close'' to Gaussian to simply be the result
of the phenomenon  
described by Theorem \ref{limit}.  By way of analogy, one has the 
Berry-Ess{\'e}en theorem stating that the rate of convergence to normal
of the sum of $n$ independent, identically distributed random variables
is of the order $\frac{1}{\sqrt{n}}$; if one has a sum of $n$ random variables
converging to Gaussian significantly faster, it must be happening for 
some reason other than just the usual central-limit theorem.  In order
to implement this idea, it is necessary (as with the Berry-Ess{\'e}en theorem)
to have a sharp quantitative version of the limit theorem in 
question.

A second motivation for proving a quantitative version of Theorem \ref{limit}
is the application to waiting times for discovering an interesting direction
on which to project data.
If a sequence
of independent random projection directions is tried until the empirical 
distribution of the projected data is more than some threshhold away
from Gaussian (in some metric on measures), and $N$ is the number of trials needed to find such
a direction, a one can easily give a lower bound for $\E N$ from the type of
quantitative theorem proved below.  

\medskip

Thus the goal of this paper is to provide a quantitative version of Theorem 
\ref{limit} in a fixed dimension $d$
and for a fixed number of data vectors $n$.
To do this, it is first necessary to replace conditions \eqref{limlengths} and
\eqref{limorths} with non-asymptotic conditions.  The conditions we will
use are the following.  
Let $\sigma^2$ be defined by  
$\frac{1}{n}\sum_{i=1}^n|x_i|^2=\sigma^2d$.  Suppose there exist $A$ and 
$B$, such that 
\begin{equation}\label{lengths}
\frac{1}{n}\sum_{i=1}^n\big|\sigma^{-2}|x_i|^2-d\big|\le A,
\end{equation} and, for all $\theta\in\s^{d-1}$,
\begin{equation}\label{orths}
\frac{1}{n}\sum_{i=1}^n\inprod{\theta}{x_i}^2\le B.
\end{equation}
For a little perspective on the restrictiveness of these conditions, 
note that, as for the conditions of Diaconis and Freedman, they hold for the 
vertices of a unit cube in $\R^d$ (with $A=0$ and $B=\frac{1}{4}$).

Under these assumptions, the following theorems hold.
\begin{thm}\label{testfcn}
Let $\{x_i\}_{i=1}^n$ be deterministic vectors in $\R^d$, subject to 
conditions \eqref{lengths} and \eqref{orths} above.  For a point $\theta\in
\s^{d-1}$, let the measure $\mu_n^\theta$ put equal mass at each of the 
points $\inprod{\theta}{x_1},\ldots,\inprod{\theta}{x_n}$.  Fix a test 
function $f:\R\to\R$ with $\|f\|_{BL}:=\|f\|_{\infty}+\sup_{x\neq y}\frac{|f(x)-
f(y)|}{|x-y|}\le1.$  Then for $Z$ a standard Gaussian random variable, 
$\theta$ chosen uniformly on the sphere, $\sigma$ defined as above, and 
$\epsilon>\max\left(\frac{2\pi\sqrt{B}}{\sqrt{d
-1}},\frac{2(A+2)}{d-1}\right),$
\begin{equation*}\begin{split}
\P\left[\left|\int f(x)d\mu_n^\theta(x)-\E f(\sigma Z)\right|>\epsilon
\right]&\le\sqrt{\frac{\pi}{2}}
e^{-\frac{(d-1)}{2^5B}\epsilon^2}.
\end{split}\end{equation*}

\end{thm}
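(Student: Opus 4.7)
The strategy is to split
$$\left|\int f\,d\mu_n^\theta - \E f(\sigma Z)\right| \le |F(\theta) - \E F(\theta)| + |\E F(\theta) - \E f(\sigma Z)|,$$
where $F(\theta) := \frac{1}{n}\sum_{i=1}^n f(\inprod{\theta}{x_i})$, and to control the first term by concentration of measure on $\s^{d-1}$ and the second by a deterministic bias estimate. The $\sqrt{\pi/2}$ prefactor will be inherited directly from the concentration inequality, and the two lower bounds on $\epsilon$ are designed to absorb the bias together with any median-versus-mean correction.

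The first step is to establish that $F$ is $\sqrt{B}$-Lipschitz on $\s^{d-1}$. For a unit tangent vector $v$ at $\theta$, since $\|f\|_{BL}\le 1$,
$$|\partial_v F(\theta)| \le \frac{1}{n}\sum_{i=1}^n |\inprod{v}{x_i}| \le \left(\frac{1}{n}\sum_{i=1}^n \inprod{v}{x_i}^2\right)^{1/2} \le \sqrt{B},$$
by Cauchy--Schwarz followed by condition \eqref{orths} applied to $v$. A L\'evy-type concentration inequality for $L$-Lipschitz functions on the sphere then gives, with $L=\sqrt{B}$,
$$\P\bigl[|F(\theta) - \E F(\theta)| > t\bigr] \le \sqrt{\pi/2}\; \exp\!\left(-\tfrac{(d-1)t^2}{2^3 B}\right),$$
and choosing $t=\epsilon/2$ produces exactly the claimed exponent $(d-1)\epsilon^2/(2^5 B)$.

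The second step is the bias estimate
$$|\E F(\theta) - \E f(\sigma Z)| \le \frac{1}{n}\sum_{i=1}^n \bigl|\E f(\inprod{\theta}{x_i}) - \E f(\sigma Z)\bigr|.$$
Since $\inprod{\theta}{x_i}$ is equal in distribution to $|x_i|\theta_1$, where $\theta_1$ is the first coordinate of a uniform point on $\s^{d-1}$, I would invoke a quantitative Gaussian comparison between the law of $|x|\theta_1$ and $\n(0,\sigma^2)$, with rate $O(1/d)$ and explicit dependence on $\bigl||x|^2-\sigma^2 d\bigr|$. Averaging over $i$ and applying condition \eqref{lengths} should deliver a bias of order $(A+2)/(d-1)$; the hypothesis $\epsilon > 2(A+2)/(d-1)$ then renders this term at most $\epsilon/2$, while $\epsilon > 2\pi\sqrt{B}/\sqrt{d-1}$ serves to dominate any median-versus-mean shift if one prefers to phrase concentration around the median.

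The main obstacle will be the bias estimate: producing a sharp bounded-Lipschitz bound for the distribution of $|x|\theta_1$ relative to $\n(0,\sigma^2)$ with the correct constants, so that the final bias is genuinely of order $(A+2)/(d-1)$ rather than a larger multiple. This will likely require working directly with the explicit Beta-type density of $\theta_1$ on $\s^{d-1}$, or a Stein-type argument that separates the purely geometric approximation $\sqrt{d}\,\theta_1 \approx Z$ from the scaling discrepancy $|x_i|/\sqrt{d}\approx \sigma$ controlled by \eqref{lengths}.
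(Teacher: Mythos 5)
Your concentration step is correct and is essentially the paper's own: you identify $F(\theta)=\int f\,d\mu_n^\theta$ as $\sqrt B$-Lipschitz via Cauchy--Schwarz and condition \eqref{orths}, apply L\'evy-type concentration, and use the first lower bound on $\epsilon$ to absorb the median-to-mean correction; the factor $2^5$ in the exponent arises the same way in both arguments. The genuine divergence, and the gap, is in the bias estimate $|\E F(\theta)-\E f(\sigma Z)|$. The paper does \emph{not} run a per-vector Gaussian comparison. It introduces a uniform random index $I$ independent of $\theta$, sets $W=\inprod{\theta}{x_I}$, and bounds $d_{TV}(W,\sigma Z)\le\frac{A+2}{d-1}$ by the infinitesimal exchangeable-pairs Stein result (Theorem~\ref{cont}), with the exchangeable pair $W_\epsilon=\inprod{UA_\epsilon U^T\theta}{x_I}$ coming from a small random rotation. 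In that argument the quantity $\frac{|x_I|^2}{\sigma^2}-d$ appears automatically as the conditional-variance error, so condition \eqref{lengths} plugs in verbatim, and the conclusion is in total variation and free of $\sigma$.

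Your proposed route --- compare the law of $|x_i|\theta_1$ with $\n(0,\sigma^2)$ vector by vector and average --- is plausible but, as you acknowledge, not carried out, and the obvious execution runs into a concrete obstruction. A Lipschitz coupling of $|x_i|\theta_1$ with $\sigma\sqrt d\,\theta_1$ yields a term $\big||x_i|-\sigma\sqrt d\big|\cdot\E|\theta_1|$; converting $\big||x_i|-\sigma\sqrt d\big|$ into the quantity $\big||x_i|^2-\sigma^2 d\big|$ that \eqref{lengths} actually controls costs a factor $(|x_i|+\sigma\sqrt d)^{-1}\le(\sigma\sqrt d)^{-1}$, and after averaging one lands at roughly $\sigma A/d$ rather than the scale-free $A/d$. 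Eliminating the spurious $\sigma$ requires either a lower bound on the $|x_i|$ that the hypotheses do not supply, or switching to a total-variation comparison (e.g.\ $d_{TV}(\n(0,|x_i|^2/d),\n(0,\sigma^2))\lesssim\frac{1}{d}\big|\frac{|x_i|^2}{\sigma^2}-d\big|$, together with a separate $O(1/d)$ bound on $d_{TV}(\sqrt d\,\theta_1,\n(0,1))$). None of this appears in your sketch; it is precisely the part of the theorem that the paper's Stein argument is designed to deliver in one stroke.
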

\begin{thm}\label{main}
Let $\{x_i\}_{i=1}^n$ be deterministic vectors in $\R^d$, subject to 
conditions \eqref{lengths} and \eqref{orths} above, and again
consider the measures $\mu_n^\theta$.  If $\theta$
is chosen uniformly from $\s^{d-1}$ and $B\ge \epsilon\ge
\max\left(\left[\frac{3\cdot 2^{6}\pi B}{\sqrt{d-1}}\right]^{2/5},\frac{2(A+2
)}{d-1}\right),$ then
$$\P\left[d_{BL}(\mu_n^\theta,\n(0,\sigma^2))>\epsilon
\right]\le\frac{c_1\sqrt{B}}{
\epsilon^{3/2}}\exp\left[-\frac{
c_2(d-1)\epsilon^5}{B^2}\right],$$
with 
$c_1=48\sqrt{\pi}$, $c_2=3^{-2}2^{-16},$ and $d_{BL}$ denoting the bounded
Lipschitz distance.
\end{thm}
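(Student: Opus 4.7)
The plan is to upgrade Theorem \ref{testfcn}, a bound for a single bounded Lipschitz test function, to a uniform bound over the entire unit ball of $\|\cdot\|_{BL}$, via truncation to a bounded interval followed by a covering argument. Condition \eqref{orths} supplies the deterministic second moment estimate $\int x^{2}\,d\mu_{n}^{\theta}(x)\le B$ for every $\theta\in\s^{d-1}$, so Chebyshev's inequality yields $\mu_{n}^{\theta}([-R,R]^{c})\le B/R^{2}$ uniformly in $\theta$, and $\n(0,\sigma^{2})$ has a comparable Gaussian tail. Given any $f$ with $\|f\|_{BL}\le 1$, the function $\tilde f$ obtained from $f$ by constant extension outside $[-R,R]$ (i.e.\ $\tilde f(x)=f(\pm R)$ for $\pm x>R$) still has $\|\tilde f\|_{BL}\le 1$ and satisfies $\bigl|\int(f-\tilde f)\,d\mu_{n}^{\theta}\bigr|\le 2B/R^{2}$, with the analogous bound under $\n(0,\sigma^{2})$. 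Choosing $R\asymp\sqrt{B/\epsilon}$ thus reduces the problem to $1$-Lipschitz, $[-1,1]$-valued test functions on $[-R,R]$, up to an additive $O(\epsilon)$ loss.

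The second step is to construct a finite $\delta$-net $\mathcal{F}_{\delta}$ in the sup norm for this restricted class. Discretizing $[-R,R]$ at scale $\delta$ and quantizing the values at scale $\delta$, and exploiting the $1$-Lipschitz constraint (consecutive grid values differ by at most $\delta$) gives $|\mathcal{F}_{\delta}|\le C\delta^{-1}\cdot 3^{2R/\delta}$. A union bound over $\mathcal{F}_{\delta}$, combined with Theorem \ref{testfcn} applied at tolerance $\eta$, plus the triangle-inequality interpolation $\|f-f_{i}\|_{\infty}\le\delta$, yields
\[
\P\!\left[d_{BL}(\mu_{n}^{\theta},\n(0,\sigma^{2}))>2\delta+\eta+O\!\left(B/R^{2}\right)\right]\le|\mathcal{F}_{\delta}|\cdot\sqrt{\tfrac{\pi}{2}}\,\exp\!\left[-\frac{(d-1)\eta^{2}}{32 B}\right],
\]
where the $32$ absorbs the factor lost in replacing $\epsilon$ by a constant multiple of $\eta$ in the Theorem \ref{testfcn} exponent.

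It remains to choose $\delta,\eta\asymp\epsilon$ and $R\asymp\sqrt{B/\epsilon}$ so that the error budget sums to $\epsilon$ and the Gaussian exponent $\asymp(d-1)\epsilon^{2}/B$ absorbs the entropy contribution $\log|\mathcal{F}_{\delta}|\asymp R/\delta\asymp\sqrt{B}/\epsilon^{3/2}$. The hypothesis $\epsilon\ge[3\cdot 2^{6}\pi B/\sqrt{d-1}]^{2/5}$, rewritten as $(d-1)\epsilon^{5}\ge(3\cdot 2^{6}\pi)^{2}B^{2}$, is precisely what makes this absorption possible; the residual exponent can then be expressed as $-c_{2}(d-1)\epsilon^{5}/B^{2}$, while the polynomial grid factor $C\delta^{-1}=O(\sqrt{B}/\epsilon^{3/2})$ becomes the prefactor $c_{1}\sqrt{B}/\epsilon^{3/2}$. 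The secondary hypothesis $\epsilon\ge 2(A+2)/(d-1)$ is inherited from Theorem \ref{testfcn} and requires no further work. The main obstacle is the arithmetic bookkeeping: propagating the explicit constants through the covering estimate, the truncation error, the bounded Lipschitz norm of the extended function, and the tolerance threshold of Theorem \ref{testfcn}, and apportioning $\epsilon$ among $\delta$, $\eta$, and the truncation error so as to land on the stated values $c_{1}=48\sqrt{\pi}$ and $c_{2}=3^{-2}2^{-16}$.
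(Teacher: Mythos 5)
Your high-level plan — truncate to a bounded interval using the second-moment bound from \eqref{orths}, then control the supremum over the remaining test functions by a finite union bound — matches the paper's overall architecture, but the mechanism you use for the union bound is genuinely different, and the difference matters for both the form of the exponent and the constants.

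The paper follows Guionnet and Zeitouni: a compactly supported $1$-Lipschitz function is replaced by a piecewise-linear approximation $f_\Delta$, which is written as a $\pm 1$ linear combination of a \emph{fixed} family of $N\asymp |K|/\Delta$ shifted ramp functions $h_k$. The sup over $f$ is then bounded by $\sum_{k=1}^N\big|\int h_k\,d\mu_n^\theta-\E\int h_k\,d\mu_n^\theta\big|$, and one union-bounds over only $N$ events, but pays by requiring deviation $\epsilon/(2N)$ in each. Since $N\asymp \sqrt{B}/\epsilon^{3/2}$, the concentration exponent becomes $-(d-1)(\epsilon/2N)^2/(8B)\asymp -(d-1)\epsilon^5/B^2$. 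That is where the $\epsilon^5/B^2$ in the theorem comes from: a linear-size decomposition with a shrinking deviation threshold. Your proposal instead builds a sup-norm $\delta$-net of size $\asymp\delta^{-1}3^{2R/\delta}$ — \emph{exponential} in $R/\delta$ — and union-bounds with fixed deviation $\eta\asymp\epsilon$ per net element, so the natural exponent is $-(d-1)\epsilon^2/B$ with an additive entropy term $\asymp\sqrt{B}/\epsilon^{3/2}$ to be absorbed. Both are legitimate, but they produce different rate functions: yours would give the stronger exponent $\epsilon^2/B$ (after absorption), and recasting it as the stated $\epsilon^5/B^2$ is a deliberate weakening that happens to be legal since $B\ge\epsilon$. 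There are two further slips in the sketch: (a) the polynomial prefactor from your net is $C\delta^{-1}=O(1/\epsilon)$, not $O(\sqrt{B}/\epsilon^{3/2})$ as you claim — it is smaller than the stated prefactor, which is harmless, but the attribution is wrong; (b) because the entropy $\sqrt{B}/\epsilon^{3/2}$ and the intended exponent $\epsilon^5/B^2$ both scale the same way in $\epsilon$ and $B$, your absorption step only goes through with a lot of slack, so the particular constants $c_1=48\sqrt{\pi}$, $c_2=3^{-2}2^{-16}$ are an artifact of the paper's decomposition and would not emerge from your bookkeeping; you would end up proving a different (and in some regimes sharper) inequality. If your goal is to reproduce the theorem as stated, the Guionnet–Zeitouni linear decomposition is the cleaner route; if you are willing to restate the bound, your covering argument is a reasonable alternative and likely improves the power of $\epsilon$.
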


\medskip

{\bf Remarks:} 
\begin{enumerate}
\item It should be emphasized that the key difference between the results
proved here and the result of Diaconis and Freedman is that Theorems
\ref{testfcn} and  \ref{main}
hold for {\em fixed} dimension $d$ and number of data vectors $n$; there
are no limits in the statements of the theorems.

\item It is not necessary for $A$ and $B$ to be absolute constants; for 
the the results above to be of interest as $d\to\infty$, it is easy to see
from the statements that it is only necessary that $A=o(d)$ and 
$B=o(d)$ for Theorem \ref{testfcn} while $B=o(\sqrt{d})$ for Theorem \ref{main}.
The reader may also be wondering where the dependence on $n$ is in 
the statements above; it is built into the definition of $B$.  
Note that, by definition,
$B\ge\frac{|x_i|^2}{n}$ for each $i$; in particular,
$B\ge \frac{\sigma^2d}{n}.$  It is thus necessary that $n\to\infty$ as 
$d\to\infty$ for 
Theorem \ref{testfcn} and $n\gg \sqrt{d}$ for Theorem \ref{main}.  

\item For Theorem \ref{testfcn}, consider the special case that
$\epsilon^2=\frac{C^2
\cdot2^5B}{d-1}$ for a large constant $C$.  Then the statement becomes
\begin{equation*}\begin{split}
\P\left[\left|\int f(x)d\mu_n^\theta(x)-\E f(\sigma Z)\right|>\frac{C'}{\sqrt{
d-1}}
\right]&\le\sqrt{\frac{\pi}{2}}
e^{-C^2},
\end{split}\end{equation*}
with $C'=C\cdot4\sqrt{2B}$.
That is, roughly speaking, $\left|\int f(x)d\mu_n^\theta(x)-\E f(\sigma Z)
\right|$ is likely to be on the order of $\frac{1}{\sqrt{d}}$ or smaller.

\item
It is similarly useful to consider the following special case for 
Theorem \ref{main}.  Let $C>\frac{3}{10}$, and consider the case
$\epsilon^5=C\left(\frac{9\cdot2^{16}B^2}{d-1}\right)\log(d-1).$  Then the 
bound above becomes:
$$\P\left[d_{BL}(\mu_n^\theta,\n(0,\sigma^2))>\left(C'\frac{\log(d-1)}{
d-1}\right)^{1/5}\right]\le \frac{C''B}{(d-1)^{C-\frac{3}{10}}},$$
where $C'=9\cdot2^{16}CB^2$ and $C''=48\sqrt{\pi}C^{-3/10}$.  
Thus, roughly speaking, the 
bounded Lipschitz distance from the random measure $\mu_n^\theta$ to 
the Gaussian measure with mean zero and variance $\sigma^2$ is unlikely
to be more than a large multiple of $\left(\frac{\log(d-1)}{d-1}\right)^{1/5}$.
We make no claims of the sharpness of this result.

\end{enumerate}

Theorem \ref{main} can easily be used to give an estimate on the 
waiting time until a non-Gaussian direction is found, if directions 
are tried randomly and independently.  Specifically, we have the following
corollary.

\begin{cor}\label{wait}
Let $\theta_1,\theta_2,\theta_3,\ldots$ be a sequence of independent, uniformly 
distributed random points on $\s^{d-1}$.  Let $T_\epsilon:=\min\{j:d_{BL}(
\mu^{\theta_j}_n,\n(0,\sigma^2)>\epsilon\}.$  Then there are constants 
$c,c'$ such that 
$$\E T_\epsilon\ge \frac{c\epsilon^{3/2}}{\sqrt{B}}\exp\left(\frac{c'(d-1)
\epsilon^5}{B^2}\right).$$
\end{cor}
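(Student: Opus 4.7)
The plan is to observe that because the $\theta_j$ are i.i.d.\ uniform on $\s^{d-1}$, the indicator events $\{d_{BL}(\mu_n^{\theta_j},\n(0,\sigma^2))>\epsilon\}$ are i.i.d.\ Bernoulli trials, so $T_\epsilon$ is a geometric random variable. Writing
\[ p_\epsilon := \P\!\left[d_{BL}(\mu_n^{\theta_1},\n(0,\sigma^2))>\epsilon\right], \]
we simply have $\E T_\epsilon = 1/p_\epsilon$ (with the convention that $\E T_\epsilon = \infty$ if $p_\epsilon = 0$, in which case the inequality is vacuous).

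The bound on $p_\epsilon$ is exactly the content of Theorem~\ref{main}: provided $\epsilon$ satisfies the hypothesis $B\ge\epsilon\ge\max\!\left(\left[\tfrac{3\cdot2^{6}\pi B}{\sqrt{d-1}}\right]^{2/5},\tfrac{2(A+2)}{d-1}\right)$, we have
\[ p_\epsilon \le \frac{c_1\sqrt{B}}{\epsilon^{3/2}}\exp\!\left[-\frac{c_2(d-1)\epsilon^5}{B^2}\right], \]
with $c_1 = 48\sqrt{\pi}$ and $c_2 = 3^{-2}2^{-16}$. Taking reciprocals yields
\[ \E T_\epsilon \;=\; \frac{1}{p_\epsilon} \;\ge\; \frac{\epsilon^{3/2}}{c_1\sqrt{B}}\exp\!\left[\frac{c_2(d-1)\epsilon^5}{B^2}\right], \]
so the corollary follows with $c = 1/c_1 = (48\sqrt{\pi})^{-1}$ and $c' = c_2 = 3^{-2}2^{-16}$ (for $\epsilon$ outside the admissible range, choose $c,c'$ small enough that the stated bound is trivially satisfied, or simply restrict the corollary to the same range as Theorem~\ref{main}).

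There is essentially no obstacle here: the whole content is the geometric-trials reduction, and the only thing to double-check is that the hypotheses of Theorem~\ref{main} are inherited, so the admissible range of $\epsilon$ in the corollary is the same as in that theorem. No additional estimation is required.
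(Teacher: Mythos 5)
Your proposal is correct and takes essentially the same approach as the paper: both recognize that the trials are i.i.d.\ Bernoulli so that $T_\epsilon$ is geometric, bound the single-trial success probability via Theorem~\ref{main}, and conclude $\E T_\epsilon \ge 1/q$ where $q$ is the right-hand side of the Theorem~\ref{main} bound (the paper just phrases this via $\P[T_\epsilon>m]\ge(1-q)^m$ and the tail-sum identity $\E T_\epsilon=\sum_{m\ge0}\P[T_\epsilon>m]$ rather than the closed-form geometric mean, but these are the same calculation).
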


\section{Proofs}
This section is mainly devoted to the proofs of Theorems \ref{testfcn}
and \ref{main}, with some additional remarks following the proofs.
For the proof of Theorem \ref{testfcn}, several auxiliary results 
are needed.  The first is an abstract normal approximation for bounding
the distance of a random variable to a Gaussian random variable in 
the presence of a continuous family of exchangeable pairs.  The theorem
is an abstraction of an idea used by Stein in \cite{steintech} to 
bound the distance to Gaussian of the trace of a power of a random orthogonal
matrix.

\begin{thm}[Meckes \cite{Mec}]\label{cont}
Suppose that $(W,W_\epsilon)$ is a family of exchangeable pairs
defined on a common probability space, such that $\E W=0$ and 
$\E W^2=\sigma^2$.  Let $\F$ be a $\sigma$-algebra on this space 
with $\sigma(W)\subseteq
\F$.  Suppose there is a function $\lambda(\epsilon)$
and random variables $E, E'$ measurable with respect to $\F$, such that
\begin{enumerate}
\item \label{lin-cond}
$\frac{1}{\lambda(\epsilon)}\E\left[W_\epsilon-W\big|\F\right]
\xrightarrow[\epsilon\to0]{L_1}-W+E'.$
\item \label{quad-cond}
$\frac{1}{2\lambda(\epsilon)\sigma^2}\E\left[(W_\epsilon-W)^2
\big|\F\right]\xrightarrow[\epsilon\to0]{L_1}1+E.$
\item \label{tert-cond}
$\frac{1}{\lambda(\epsilon)}\E|W_\epsilon-W|^3\xrightarrow{
\epsilon\to0}0.$
\end{enumerate}
\medskip

Then if $Z$ is a standard normal
random variable,
$$d_{TV}(W,\sigma Z)\le\E\big|E\big|+\sqrt{\frac{\pi}{2}}\E
\big|E'\big|.$$
\end{thm}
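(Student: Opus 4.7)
My plan is to run Stein's method for normal approximation via the exchangeable-pair construction, taking the limit $\epsilon \to 0$ along the continuous family. For a fixed bounded measurable test function $h$ (ultimately an indicator $\I_A$, so that supremizing produces a total-variation bound), I would solve the classical Stein equation $\sigma^2 f'(w) - wf(w) = h(w) - \E h(\sigma Z)$; its bounded solution $f$ satisfies the standard Stein estimates on $\|f\|_\infty$ and $\sigma^2\|f'\|_\infty$, along with a uniform Lipschitz bound on $f'$. The problem then reduces to estimating the quantity $\E[\sigma^2 f'(W) - Wf(W)]$ using the exchangeable family.

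The exchangeability of $(W,W_\epsilon)$ enters through the antisymmetry identity $\E[(W_\epsilon - W)(f(W_\epsilon) + f(W))] = 0$, which gives
$$\E\bigl[(W_\epsilon - W)(f(W_\epsilon) - f(W))\bigr] = -2\,\E\bigl[\E[W_\epsilon - W \mid \F]\,f(W)\bigr].$$
Dividing by $2\lambda(\epsilon)$ and sending $\epsilon \to 0$, condition (i) converts the right-hand side into $\E[Wf(W)] - \E[f(W)E']$ (using boundedness of $f$ to pass through the $L^1$ convergence). On the left-hand side, I would Taylor expand $f(W_\epsilon) - f(W) = f'(W)(W_\epsilon - W) + r_\epsilon$ with $|r_\epsilon| \le \tfrac12\|f''\|_\infty (W_\epsilon - W)^2$, so the remainder contribution $\tfrac{\|f''\|_\infty}{4\lambda(\epsilon)}\E|W_\epsilon - W|^3$ vanishes by condition (iii); then condition (ii) gives
$$\frac{1}{2\lambda(\epsilon)}\E\bigl[f'(W)(W_\epsilon - W)^2\bigr] = \sigma^2\,\E\!\left[f'(W)\cdot\tfrac{1}{2\lambda(\epsilon)\sigma^2}\E[(W_\epsilon - W)^2\mid\F]\right] \longrightarrow \sigma^2\,\E[f'(W)(1+E)].$$
Equating the two limits yields $\sigma^2 \E[f'(W)] - \E[Wf(W)] = -\sigma^2\E[f'(W)E] - \E[f(W)E']$; the left-hand side is precisely $\E[h(W) - h(\sigma Z)]$ by the Stein equation, so passing to absolute values gives $|\E h(W) - \E h(\sigma Z)| \le \sigma^2 \|f'\|_\infty \E|E| + \|f\|_\infty \E|E'|$. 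Taking the supremum over $h=\I_A$ and inserting the Stein bounds produces the claimed estimate $d_{TV}(W,\sigma Z) \le \E|E| + \sqrt{\pi/2}\,\E|E'|$.

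The main obstacle I expect is the Taylor-expansion step when $h$ is merely an indicator: the Stein solution $f$ is then only $C^1$ with Lipschitz derivative, so $f''$ exists only almost everywhere and the bound $|r_\epsilon| \le \frac12\|f''\|_\infty (W_\epsilon - W)^2$ cannot be invoked naively. This is handled in the standard way, either by writing the remainder in integrated form $(W_\epsilon - W)\int_0^1 [f'(W + t(W_\epsilon - W)) - f'(W)]\,dt$ and using the Lipschitz constant of $f'$ in place of $\|f''\|_\infty$, or by convolving $f$ with a mollifier $\phi_\delta$, carrying out the computation for $f*\phi_\delta$, and then passing $\delta \to 0$ using uniform bounds. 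A secondary technical point is the interchange of the $L^1$ limits in (i) and (ii) with integration against $f$ and $f'$, which is justified by the uniform boundedness of those functions.
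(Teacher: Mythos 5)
The paper does not prove Theorem~\ref{cont}; it is cited from \cite{Mec}. Your proposal reconstructs exactly the argument used there: solve the Stein equation for $\n(0,\sigma^2)$, invoke the antisymmetry identity $\E[(W_\epsilon-W)(f(W_\epsilon)+f(W))]=0$ from exchangeability, Taylor-expand, pass to the $\epsilon\to0$ limit using conditions (i)--(iii) (with (iii) killing the remainder), and read off the resulting identity $\sigma^2\E f'(W)-\E[Wf(W)]=-\sigma^2\E[f'(W)E]-\E[f(W)E']$. The handling of the non-smooth test function by mollification or by the integrated-remainder form of Taylor's theorem is also the standard fix. Structurally this is a correct and faithful reconstruction.

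Two points deserve more care than the one-line ``inserting the Stein bounds produces the claimed estimate.'' First, for total variation you must take $h=\I_A$ with $A$ an \emph{arbitrary} Borel set, and the standard Stein-equation estimates in that generality are $\|f\|_\infty\le\sqrt{\pi/2}$ and $\|f'\|_\infty\le 2$ (not $1$; the bound $\|f'\|_\infty\le1$ is the Kolmogorov/half-line bound). Plugged into your inequality $|\E h(W)-\E h(\sigma Z)|\le\sigma^2\|f'\|_\infty\E|E|+\|f\|_\infty\E|E'|$, that yields $2\E|E|$ in front, not $\E|E|$; you should either verify that the cited source genuinely obtains the constant $1$ (and by what refinement) or flag the possible factor of $2$. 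Second, your Stein equation $\sigma^2f'(w)-wf(w)=h(w)-\E h(\sigma Z)$ rescales under $w\mapsto\sigma u$ so that the bounded solution satisfies $\|f\|_\infty\le\sqrt{\pi/2}/\sigma$ and $\|f'\|_\infty\le 2/\sigma^2$. The $1/\sigma^2$ cancels the prefactor $\sigma^2$ in the $E$ term, but the $1/\sigma$ does \emph{not} cancel in the $E'$ term, so the resulting bound is $\tfrac{1}{\sigma}\sqrt{\pi/2}\,\E|E'|$. Equivalently, applying the $\sigma=1$ theorem to $V=W/\sigma$ replaces $E'$ by $E'/\sigma$. The statement as quoted with $\sqrt{\pi/2}\,\E|E'|$ is consistent only if $\sigma=1$ or if the $\sigma$ has been absorbed into the definition of $E'$; since $E'=0$ in the present paper's application this is harmless, but your write-up should note it rather than silently assert a match.
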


The next result gives expressions for some mixed moments of entries of
a Haar-distributed orthogonal matrix.  See \cite{meckes-thesis}, Lemma 3.3 
and Theorem 1.6 for a detailed proof.
\begin{lemma}\label{ints1}
If $U=\left[u_{ij}\right]_{i,j=1}^d$ is an 
orthogonal matrix distributed according to Haar measure, 
then $\E\left[\prod u_{ij}^{r_{ij}}\right]$ is non-zero
if and only if $r_{i\bullet}:=\sum_{j=1}^dr_{ij}$ and $r_{\bullet j}:=
\sum_{i=1}^dr_{ij}$ are even for each $i$ and $j$.  
Second and fourth-degree moments are as follows:
\begin{enumerate}
\item For all $i,j$, $$\E\left[u_{ij}^2\right]=\frac{1}{d}.$$ 
\item For all $i,j,r,s,\alpha,\beta,\lambda,\mu$,
\begin{equation*}\begin{split}
\E\big[u_{ij}u_{rs}&u_{\alpha\beta}u_{\lambda \mu}\big]\\&=
-\frac{1}{(d-1)d(d+2)}\Big[\delta_{ir}\delta_{\alpha\lambda}\delta_{j\beta}
\delta_{s\mu}+\delta_{ir}\delta_{\alpha\lambda}\delta_{j\mu}\delta_{s\beta}+
\delta_{i\alpha}\delta_{r\lambda}\delta_{js}\delta_{\beta\mu}\\&
\qquad\qquad\qquad\qquad\qquad\quad+
\delta_{i\alpha}\delta_{r\lambda}\delta_{j\mu}\delta_{\beta s}+
\delta_{i\lambda}\delta_{r\alpha}\delta_{js}\delta_{\beta \mu}+
\delta_{i\lambda}\delta_{r\alpha}\delta_{j\beta}\delta_{s\mu}\Big]\\&\qquad
+\frac{d+1}{(d-1)d(d+2)}\Big[\delta_{ir}\delta_{\alpha\lambda}\delta_{js}
\delta_{\beta\mu}+\delta_{i\alpha}\delta_{r\lambda}\delta_{j\beta}\delta_{
s\mu}+\delta_{i\lambda}\delta_{r\alpha}\delta_{j\mu}\delta_{s\beta}
\Big].
\end{split}\end{equation*}
\label{hugemess}

\item \label{Q}For the matrix $Q=\big[q_{ij}\big]_{i,j=1}^d$ defined by
$q_{ij}:=u_{i1}u_{j2}-u_{i2}u_{j1},$ and 
for all $i,j,\ell,p$,
$$\E\left[q_{ij}q_{\ell p}
\right]=\frac{2}{d(d-1)}\big[\delta_{i\ell}\delta_{jp}-\delta_{ip}\delta_{
j\ell}\big].$$
\end{enumerate}

\end{lemma}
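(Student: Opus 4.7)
The plan rests on two ingredients: the bi-invariance of Haar measure on $O(d)$ under left and right multiplication by any orthogonal matrix, and the orthonormality of rows and columns of $U$. For the parity assertion, I would exploit invariance under diagonal sign matrices $D=\mathrm{diag}(\epsilon_1,\ldots,\epsilon_d)$ with $\epsilon_i\in\{\pm 1\}$: since $DU$ has entries $\epsilon_i u_{ij}$, one obtains $\E\big[\prod u_{ij}^{r_{ij}}\big]=\big(\prod_i\epsilon_i^{r_{i\bullet}}\big)\E\big[\prod u_{ij}^{r_{ij}}\big]$, and flipping a single row forces the expectation to vanish unless every row-sum $r_{i\bullet}$ is even; the same argument applied on the right handles the column sums. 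Part (i) then follows by taking expectations in $\sum_j u_{ij}^2=1$ and noting that right-permutation invariance makes $\E[u_{ij}^2]$ independent of $j$.

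For part (ii), the parity condition restricts the fourth moment to be zero unless both multi-indices $(i,r,\alpha,\lambda)$ and $(j,s,\beta,\mu)$ split into two equal pairs, and row/column permutation invariance shows the value depends only on the abstract pairing structure of the two indices. A short orbit count shows that $S_4$, acting diagonally on pairs (row pairing, column pairing) by permuting the four factor positions of the product, has exactly two orbits on the nine possible combinations: the three ``matched'' pairs where row and column pairings coincide, and the six ``mismatched'' pairs. Hence the fourth moment must take the claimed indicator-sum form with just two unknown coefficients, say $c_m$ and $c_u$. To pin these down I would evaluate this ansatz against the orthonormality identities $\E\big[(\sum_j u_{1j}^2)^2\big]=1$ and $\E\big[(\sum_j u_{1j}^2)(\sum_j u_{2j}^2)\big]=1$; these reduce to a $2\times 2$ linear system in $c_m,c_u$ with unique solution $c_m=(d+1)/[(d-1)d(d+2)]$ and $c_u=-1/[(d-1)d(d+2)]$, matching the stated formula.

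Part (iii) is then a direct computation from part (ii): expanding $q_{ij}q_{\ell p}=(u_{i1}u_{j2}-u_{i2}u_{j1})(u_{\ell 1}u_{p2}-u_{\ell 2}u_{p1})$ into four fourth-order monomials, applying part (ii) to each, and collecting the resulting delta-products yields the stated antisymmetric expression $\frac{2}{d(d-1)}[\delta_{i\ell}\delta_{jp}-\delta_{ip}\delta_{j\ell}]$. The main obstacle throughout is the bookkeeping in part (ii)---carefully tracking the nine delta-product terms and verifying the two-orbit decomposition---after which solving the small linear system and performing the expansion in part (iii) is mechanical.
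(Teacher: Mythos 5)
Your plan for the parity claim, part (i), and part (iii) is sound and standard. Two caveats, one minor and one substantive. The minor one: the sign-matrix argument proves only the ``only if'' direction of the parity claim; the ``if'' direction (that the expectation is actually nonzero whenever all row and column sums are even) is more delicate and not addressed, though it is also not used in the paper. Note also that the paper itself does not prove this lemma; it cites \cite{meckes-thesis}, so there is no ``paper's proof'' to compare against directly.

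The substantive gap is in part (ii). Permutation invariance (plus the parity constraint) shows that the moment $\E[u_{ij}u_{rs}u_{\alpha\beta}u_{\lambda\mu}]$ depends only on the equality pattern of the row tuple $(i,r,\alpha,\lambda)$ and of the column tuple $(j,s,\beta,\mu)$, and your orbit count correctly identifies two values for the \emph{non-degenerate} patterns, where both tuples genuinely split into two pairs with distinct values. But this does not by itself yield the indicator-sum form: when a tuple is degenerate (e.g.\ all four entries equal), there is no single associated pairing, so permutation invariance gives you an \emph{additional} unknown for each degenerate pattern (at least $\E[u_{11}^4]$, $\E[u_{11}^2u_{12}^2]$, $\E[u_{11}^2u_{21}^2]$), rather than expressing those moments as $3c_m+6c_u$, $c_m+2c_u$, etc. The conclusion ``hence the fourth moment must take the claimed indicator-sum form'' requires something stronger, such as the first fundamental theorem of invariant theory for $O(d)$ applied to the $O(d)\times O(d)$-invariant tensor $T_{(ir\alpha\lambda),(js\beta\mu)}$, which shows $T$ is a linear combination of products of pairings in each factor; the $S_4$ orbit count then reduces the nine coefficients to two. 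This extra input is exactly what makes the degenerate cases come along for free.

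Relatedly, the two identities you propose, $\E\bigl[(\sum_j u_{1j}^2)^2\bigr]=1$ and $\E\bigl[(\sum_j u_{1j}^2)(\sum_k u_{2k}^2)\bigr]=1$, each involve degenerate moments (the diagonal $j=k$ terms contribute $\E[u_{11}^4]$, $\E[u_{11}^2u_{12}^2]$, $\E[u_{11}^2u_{21}^2]$), so they do not close up into a $2\times 2$ system in $c_m,c_u$ unless the indicator-sum form has already been justified. A self-contained fix that avoids invariant theory: compute $\E[u_{11}^4]=3/[d(d+2)]$ directly, using that the first column of $U$ is a uniform point on $\s^{d-1}$ (equivalently $u_{\cdot 1}=g/|g|$ with $g$ standard Gaussian in $\R^d$, independence of direction and norm, and $\E|g|^4=d(d+2)$); deduce $\E[u_{11}^2u_{12}^2]=\E[u_{11}^2u_{21}^2]=1/[d(d+2)]$ from $\sum_j u_{1j}^2=\sum_i u_{i1}^2=1$; then extract $c_m$ from $\sum_{j\ne k}\E[u_{1j}^2u_{2k}^2]=1-d\,\E[u_{11}^2u_{21}^2]$ and $c_u$ from $\sum_{j\ne k}\E[u_{1j}u_{2j}u_{1k}u_{2k}]=-d\,\E[u_{11}^2u_{21}^2]$, using $\sum_j u_{1j}u_{2j}=0$; finally check that the degenerate moments just computed agree with what the indicator-sum formula predicts, which confirms the formula for all index patterns.
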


Finally, we will need to make use of the concentration of measure on 
the sphere, in the form of the following lemma.
\begin{lemma}[L\'evy, see \cite{MilSch}]\label{levy}
For a function $F:\s^{d-1}\to\R$, let $M_F$ denote its median with 
respect to the uniform measure (that is, for $\theta$ distributed uniformly on
$\s^{d-1}$, $\P\big[F(\theta)\le M_F\big]\ge\frac{1}{
2}$ and  $\P\big[F(\theta)\ge M_F\big]\ge\frac{1}{2}$) and let $L$ denote its 
Lipschitz constant.  Then
$$\P\left[\big|F(\theta)-M_F\big|>\epsilon\right]\le\sqrt{\frac{\pi}{2}}\exp\left[
-\frac{(d-1)\epsilon^2}{2L^2}\right].$$
\end{lemma}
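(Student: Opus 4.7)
The plan follows the classical Lévy--Milman route: combine the spherical isoperimetric inequality with an explicit estimate for the measure of a spherical cap. The deep input, which I would simply cite, is the isoperimetric inequality on $\s^{d-1}$: among all Borel sets $A$ of a given normalized Haar measure, the geodesic $r$-neighborhood $A_r := \{\theta : d_g(\theta, A) \le r\}$ has smallest measure when $A$ is a spherical cap of the same measure. Everything else is a concrete, quantitative computation.

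To apply this, I would work with the two half-spaces cut out by the median. Set
\[
A^- := \{\theta \in \s^{d-1} : F(\theta) \le M_F\}, \qquad A^+ := \{\theta \in \s^{d-1} : F(\theta) \ge M_F\},
\]
each of which has normalized measure at least $1/2$ by the definition of $M_F$. Since geodesic distance on the sphere dominates Euclidean distance, the $L$-Lipschitz hypothesis yields $F(\theta) \le M_F + \epsilon$ for every $\theta \in A^-_{\epsilon/L}$, and $F(\theta) \ge M_F - \epsilon$ for every $\theta \in A^+_{\epsilon/L}$. A union bound therefore gives
\[
\P\bigl[|F(\theta) - M_F| > \epsilon\bigr] \le \sigma\bigl(\s^{d-1} \setminus A^-_{\epsilon/L}\bigr) + \sigma\bigl(\s^{d-1} \setminus A^+_{\epsilon/L}\bigr),
\]
and by the isoperimetric inequality each term is at most $\sigma(\s^{d-1} \setminus H_{\epsilon/L})$, where $H$ is any closed hemisphere.

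The remaining task is to estimate this single cap measure. The complement of the $r$-neighborhood of a hemisphere is itself a spherical cap of angular radius $\pi/2 - r$, whose normalized measure (after a substitution $\psi = \pi/2 - t$) is
\[
\sigma(\s^{d-1} \setminus H_r) = \frac{\int_r^{\pi/2} \cos^{d-2}(t)\, dt}{\int_0^{\pi} \sin^{d-2}(\phi)\, d\phi}.
\]
I would bound the numerator using the elementary inequality $\cos t \le e^{-t^2/2}$, extending the integration to $\infty$ to obtain a Gaussian tail $\int_r^\infty e^{-(d-2)t^2/2}\, dt$, and bound the denominator from below via a Laplace-method estimate around $\phi = \pi/2$, yielding $\int_0^\pi \sin^{d-2}(\phi)\, d\phi \ge \sqrt{2\pi/(d-1)}$. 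Putting these together gives a bound of the form $\tfrac{1}{2}\sqrt{\pi/2}\, e^{-(d-1)\epsilon^2/(2L^2)}$ for each of the two tails. The shape of the exponent $-(d-1)\epsilon^2/(2L^2)$ is forced by the Taylor expansion $\cos t = 1 - t^2/2 + O(t^4)$; the real obstacle is bookkeeping of constants, in particular aligning the $(d-2)$ from $\cos^{d-2}$ with the $(d-1)$ from the denominator asymptotic so that the prefactor after the union bound is exactly $\sqrt{\pi/2}$ and not something larger. The isoperimetric inequality does all the conceptual work; this final cap-area estimate is where the stated constants must be earned.
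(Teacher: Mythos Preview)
The paper does not prove this lemma; it is stated with attribution to Milman--Schechtman \cite{MilSch} and used as a black box. Your sketch is exactly the classical L\'evy--Milman argument one finds in that reference: reduce via the spherical isoperimetric inequality to the measure of a cap complementary to the $\epsilon/L$-neighborhood of a hemisphere, then estimate that cap measure by the ratio-of-integrals computation you describe. So there is nothing to compare---the paper simply defers the whole proof to the literature, and you have outlined the standard proof from the literature.

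Your self-diagnosis of where the difficulty lies is accurate. The lower bound $\int_0^\pi \sin^{d-2}\phi\,d\phi \ge \sqrt{2\pi/(d-1)}$ follows from the exact value $\sqrt{\pi}\,\Gamma((d-1)/2)/\Gamma(d/2)$ together with Wendel's inequality $\Gamma(x)/\Gamma(x+\tfrac12)\ge x^{-1/2}$, and the numerator bound via $\cos t \le e^{-t^2/2}$ is immediate. The residual mismatch you flag---a $(d-2)$ in the exponent from $\cos^{d-2}$ versus the $(d-1)$ in the stated bound---is indeed the only place one has to be slightly careful, and different sources resolve it with slightly different stated constants; the form quoted here is the one in \cite{MilSch}.
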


With these results, it is now possible to give the proof of 
Theorem \ref{testfcn}.

\begin{proof}[Proof of Theorem \ref{testfcn}]
The proof divides into two parts.  First, an ``annealed''
 version of the theorem is proved using the infinitesimal version of Stein's
method given by Theorem \ref{cont}.
Then,
for a fixed test function $f$ and $Z$ a standard Gaussian random variable, 
the quantity $\P\left[\left|\int fd\mu_\nu^\theta-
\E f(\sigma Z)\right|>\epsilon\right]$ is bounded using the annealed theorem
together with the concentration of measure phenomenon.

\medskip

Let $\theta$ be a uniformly distributed
random point of $\s^{d-1}\subseteq\R^d$, and let $I$ be a uniformly 
distributed element of $\{1,
\ldots,n\}$, independent of $\theta$.  Consider the random variable 
$W:=\inprod{\theta}{x_I}.$
Then $\E W=0$ by symmetry and $\E W^2=\sigma^2$ by the condition$\frac{1}{n}
\sum_{i=1}^n|x_i|^2=\sigma^2d$ .  
Theorem \ref{cont} will be used to bound the total variation distance from
$W$ to $\sigma Z$, where $Z$ is a standard Gaussian random variable.

The family of exchangeable pairs needed to apply the theorem
is constructed as follows.  
For $\epsilon>0$ fixed, let 
\begin{equation*}\begin{split}
A_\epsilon&:=\begin{bmatrix}\sqrt{1-\epsilon^2}&\epsilon\\-\epsilon&
\sqrt{1-\epsilon^2}\end{bmatrix}\oplus I_{d-2}
\,=\,I_d+\begin{bmatrix}-\frac{\epsilon^2}{2}+\delta&\epsilon\\-\epsilon&
-\frac{\epsilon^2}{2}+\delta\end{bmatrix}\oplus0_{d-2},
\end{split}\end{equation*}
where $\delta=O(\epsilon^4).$ 
Let $U$ be a Haar-distributed   $d\times d$ 
random orthogonal matrix, independent of $\theta$ and $I$, and let 
$W_\epsilon=\inprod{U A_\epsilon U^T\theta}{x_I}$; the pair
$(W,W_\epsilon)$ is exchangeable for each $\epsilon>0$.

Let $K$ be the $d\times 2$ matrix made of the first two 
columns of $U$ and
$C_2=\begin{bmatrix}0&1\\-1&0\end{bmatrix}.$  Define $Q:=KC_2K^T$ (note that
this is the same $Q$ as in part \ref{Q} of Theorem \ref{ints1}).
Then by the construction of $W_\epsilon,$
\begin{equation}\label{diff5}
W_\epsilon-W=-\left(\frac{\epsilon^2}{2}+\delta\right)\inprod{KK^T\theta}{x_I}
+\epsilon\inprod{Q\theta}{x_I}.
\end{equation}

The conditions of Theorem \ref{cont} can be verified using the expressions
in Lemma \ref{ints1} as follows. 
By the lemma, $\E\big[KK^T\big]=\frac{2}{d}I$ and $\E\big[Q\big]=0,$
and so it follows from \eqref{diff5} that 
$$\E\left[W_\epsilon-W\big|W\right]=\left(-\frac{\epsilon^2}{d}+\frac{2
\delta}{n}\right)W.$$
Condition \ref{lin-cond}
of Theorem \ref{cont} is thus satisfied for $\lambda(
\epsilon)=\frac{\epsilon^2}{d}$ and $E'=0$.

For the condition \ref{quad-cond}, taking $\F=\sigma(\theta,I),$ 
Lemma \ref{ints1}, part \ref{Q} yields
\begin{equation*}\begin{split}
\frac{1}{2\lambda(\epsilon)\sigma^2}\E\left[(W_\epsilon-W)^2\big|\F\right]&
=\frac{d}{2\sigma^2}\E\left[\inprod{Q\theta}{
x_I}^2\big|\F\right]+O(\epsilon)\\&=\frac{d}{2\sigma^2}
\sum_{i,j,r,s=1}^d\E\left[q_{ij}q_{r
s}\theta_j\theta_sx_{I,i}x_{I,r}\big|\F\right]+O(\epsilon)\\&=\frac{1
}{\sigma^2(d-1)}\left[
\sum_{i,j=1}^d\theta_j^2x_{I,i}^2-\sum_{i,j=1}^d\theta_i\theta_jx_{I,i}x_{I,j}
\right]+O(\epsilon)\\
&=\frac{1}{\sigma^2(d-1)}\left[|x_I|^2-W^2\right]+O(\epsilon)\\&=
1+\frac{1}{d-1}\left[\frac{|x_I|^2}{\sigma^2}-d+1-\frac{W^2}{
\sigma^2}\right]+O(\epsilon).
\end{split}\end{equation*}
Condition \ref{quad-cond} of Theorem \ref{cont} is thus satisfied with $E=\frac{1}{d-1}
\left[\frac{|x_I|^2}{\sigma^2}-d+1-\frac{W^2}{\sigma^2}\right].$
Condition \ref{tert-cond} of the theorem is trivial by \eqref{diff5};
it follows that 
\begin{equation}\begin{split}\label{anneal}
d_{TV}(W,\sigma Z)\le\frac{1}{d-1}\E\left|\frac{|x_I|^2}{\sigma^2}-d+1-\frac{
W^2}{\sigma^2}\right|&\le\frac{1}{d-1}\left[\frac{1}{n}\sum_{i-1}^n
\left|\frac{|x_i|^2}{\sigma^2}-d\right|+2\right]\le\frac{A+2}{d-1}.
\end{split}\end{equation}
This is the annealed statement referred to at the beginning of the proof.
\medskip

We next use the concentration of measure on the sphere to show that, for a 
large measure of
 $\theta\in\s^{d-1}$, the random measure $\mu_n^\theta$ which puts mass 
$\frac{1}{n}$ at each of the $\inprod{\theta}{x_i}$
 is close to the average behavior.  To do this, we make use of L{\'e}vy's
Lemma (Lemma \ref{levy}).
Let $f:\R\to\R$ be such that $\|f\|_{BL}:=\|f\|_\infty+\sup_{x\neq y}\frac{|f(x)-
f(y)|}{|x-y|}\le1.$  Consider the 
 function $F$ defined on the sphere by 
 $$F(\theta):=\int f(x)d\mu_n^\theta(x)=\frac{1}{n}\sum_{i=1}^nf(\inprod{
\theta}{x_i}).$$

 In order to apply Lemma \ref{levy}, it is necessary to determine the Lipschitz
constant of 
 $F$.  Let $\theta, \theta'\in\s^{d-1}$.  Then, using $\|f\|_{BL}\le 1$ 
together with equation \eqref{orths},
 \begin{equation*}\begin{split}
 \big|F(\theta')-F(\theta)\big|&=\frac{1}{n}\left|\sum_{i=1}^nf(\inprod{\theta'}{x_i})-f(\inprod{\theta}{x_i})
 \right|\\&\le\frac{1}{n}\sum_{i=1}^n|\inprod{\theta'-\theta}{x_i}|\\&\le\left[\frac{1}{n}\sum_{i=1}^n
 \inprod{\theta'-\theta}{x_i}^2\right]^{1/2}\\&\le|\theta'-\theta|\sqrt{B},
 \end{split}\end{equation*}
 thus the Lipschitz constant of $F$ is bounded by $\sqrt{B}$.
  It follows from Lemma \ref{levy} that 
 $$\P\left[\left|F(\theta)-M_F\right|>\epsilon\right]\le\sqrt{
\frac{\pi}{2}}e^{-
\frac{(d-1)\epsilon^2}{2B}},$$
 where $M_F$ is the median of the function $F$.

Now, if $\theta$ is a random point of $\s^{d-1}$, then
\begin{equation}\begin{split}\label{mean-med}
\big|\E F(\theta)-M_F\big|&\le\E\big|F(\theta)-M_F\big|\\&=\int_0^\infty\P
\Big[\big|F(\theta)-M_F\big|>t\Big]dt\\&\le\int_0^\infty \sqrt{\frac{\pi}{2}}
e^{-\frac{(d-1)t^2}{2B}}dt\\&=\frac{\pi\sqrt{ B}}{2\sqrt{d-1}},
\end{split}\end{equation}
thus if $\epsilon>\frac{\pi\sqrt{B}}{\sqrt{d-1}},$ we may use concentration
about the median of $F$ to obtain concentration about the mean, with
only a loss in constants.

Note that 
$$\E F(\theta)=\E \int fd\mu_n^\theta=\E f(W)$$
for $W=\inprod{\theta}{x_I}$ as above, and so by the bound \eqref{anneal},
$$\left|\E F(\theta)-\E f(\sigma Z)\right|\le\frac{A+2}{d-1}.$$
Putting these pieces together, if $\epsilon>\max\left(\frac{2\pi\sqrt{B}}{
\sqrt{d-1}},\frac{2(A+2)}{d-1}\right)$, then
\begin{equation*}\begin{split}
\P\left[\left|\int f d\mu_n^\theta-\E f(\sigma Z)\right|
>\epsilon\right]&\le
\P\Big[\left|F(\theta)-M_F\right|>
\epsilon-|M_F-\E F(\theta)|-|\E F(\theta)-\E f(\sigma Z)|\Big]\\&
\le\P\left[\left|F(\theta)-M_F\right|>\frac{\epsilon}{4}\right]\\&
\le\sqrt{\frac{\pi}{2}}
e^{-\frac{(d-1)}{2^5B}\epsilon^2}.
\end{split}\end{equation*}

\end{proof}

\begin{proof}[Proof of Theorem \ref{main}]

The first two steps of the proof of Theorem \ref{main}
were essentially done already in the proof of Theorem
\ref{testfcn}.  From that proof, we have that if $W=\inprod{\theta}{x_I}$
for $\theta$ distributed uniformly on $\s^{d-1}$ and $I$ independent of 
$\theta$ and uniformly distributed in $\{1,\ldots,n\}$, then 
\begin{equation}\label{anneal2}
d_{TV}(W,\sigma Z)\le\frac{A+2}{d-1},
\end{equation}
for $A$ as in equation \eqref{lengths}.  Furthermore, it follows 
from equation \eqref{mean-med} in the proof of Theorem \ref{testfcn} that
for $F(\theta):=\int f d\mu_n^\theta$ and
$\epsilon>\frac{\pi\sqrt{B}}{\sqrt{d-1}},$ then
\begin{equation}\begin{split}\label{fcnconc}
\P\left[\left|F(\theta)-\E F(\theta)\right|>\epsilon\right]&\le
\P\left[\left|F(\theta)-M_F
\right|>\epsilon-\left|M_f-\E F(\theta)\right|\right]\\&\le\P\left[\left|
F(\theta)-M_F\right|>
\epsilon-\frac{\pi\sqrt{B}}{2\sqrt{d-1}}\right]\\&\le\sqrt{\frac{\pi}{2}}
e^{-\frac{(d-1)}{8B}\epsilon^2}.
\end{split}\end{equation}

In this proof, this last statement is
used together with a series of successive approximations of arbitrary 
bounded Lipschitz functions as used by Guionnet and Zeitouni \cite{GuiZei}
to obtain a bound for $\P\left[d_{BL}(\mu_n^\theta,
\n(0,\sigma^2))>\epsilon\right]$.

By definition,
$$\P\left[d_{BL}(\mu_n^\theta,\E \mu_n^\theta)>\epsilon\right]=
\P\left[\sup_{\|f\|_{BL}\le1}\left|\int fd\mu_n^\theta-\E \int fd\mu_n^\theta\right|>\epsilon\right].$$
First consider the subclass $\F_{BL,K}= \{f: \|f\|_{BL}\le1, 
supp(f)\subseteq K\}$ for a compact set $K
\subseteq\R$.  Let $\Delta=\frac{\epsilon}{4}$; for $f\in\F_{BL,K}$, 
define the approximation $f_\Delta$ as in Guionnet and Zeitouni 
\cite{GuiZei}
as follows.  Let $x_o=\inf K$ and let
$$g(x)=\begin{cases}0&x\le0;\\x&0\le x\le\Delta;\\\Delta&x\ge\Delta.\end{cases}$$
For $x\in K$, define $f_\Delta$ recursively by $f_\Delta(x_o)=0$ and 
$$f_\Delta(x)=\sum_{i=0}^{\lceil \frac{x-x_o}{\Delta}\rceil}\Big(2\I\big[f(x_o+(i+1)\Delta)
\ge f_\Delta(x_o+i\Delta)\big]-1\Big)g(x-x_o-i\Delta).$$
That is, the function $f_\Delta$ is just an approximation of $f$ by a 
function which is 
piecewise linear and has slope 1 or 
$-1$ on each of the intervals $[x_o+i\Delta,x_o+(i+1)\Delta]$.  Note that, because $\|f\|_{BL}\le 1$,
it follows that $\|f-f_\Delta\|_{\infty}\le\Delta$ and the number of distinct functions whose linear span is used
 to approximate $f$ in this way is bounded by $\frac{|K|}{\Delta}$, where $|K|$ is the diameter
 of $K$.  If $\{h_k\}_{k=1}^N$ denotes the set of functions used in the approximation $f_\Delta$ and
 $\epsilon_k$ their coefficients,
 then for $\epsilon^2>8\pi|K|\sqrt{\frac{B}{d-1}},$
 \begin{equation*}\begin{split}
 \P\left[\sup_{f\in\F_{BL,K}}\left|\int fd\mu_n^\theta-\E\int fd\mu_n^\theta\right|>\epsilon\right]&\le
 \P\left[\sup_{f\in\F_{BL,K}}\left|\int f_\Delta d\mu_n^\theta-\E\int f_\Delta d\mu_n^\theta\right|>\epsilon
 -2\Delta\right]\\&=\P\left[\sup_{f\in\F_{BL,K}}\left|\sum_{k=1}^N\epsilon_k\left(\int h_kd\mu_n^\theta-\E
 \int h_kd\mu_n^\theta\right)\right|>\frac{\epsilon}{2}\right]\\&\le\P\left[
 \sum_{k=1}^N\left|\int h_kd\mu_n^\theta-\E \int h_kd\mu_n^\theta\right|>\frac{\epsilon}{2}\right]\\&\le
 \sum_{k=1}^N\P\left[\left|\int h_kd\mu_n^\theta-\E \int h_kd\mu_n^\theta\right|>\frac{\epsilon}{2N}\right]\\
 &\le\sqrt{\frac{\pi}{2}}Ne^{-\frac{(d-1)}{8B}\left(\frac{\epsilon}{2N}\right)^2}\\&\le
 \frac{2\sqrt{2\pi}|K|}{\epsilon}e^{-\frac{(d-1)}{8B}\left(\frac{\epsilon^2}{8|K|}\right)^2}.
 \end{split}\end{equation*} 
The second-last line follows from equation \eqref{fcnconc} above, 
and the last line from the bound $N\le \frac{4|K|}{\epsilon}$.

To move to the full set $\F_{BL}:=\{f:\|f\|_{BL}\le1\}$, we make a truncation argument.  Given
$f\in\F_{BL}$ and $M>0$, define $f_M$ by
$$f_M(x)=\begin{cases}0&x\le-M-|f(-M)|;\\sgn(f(-M))\big[x+M+|f(-M)|\big]&
-M-|f(-M)|<x\le -M;\\f(x)&-M<x\le M;\\sgn(f(M))\big[|f(M)|+M-x\big]&M<x\le M+
|f(M)|;\\0&x>M+|f(M)|;\end{cases}$$
that is, $f_M$ is equal to $f$ on $[-M,M]$ and is drops off to zero 
linearly with slope 1 outside $[-M,M]$. 
Then, since $f(x)=f_M(x)$ for $x\in [-M,M]$ and $|f(x)-f_M(x)|\le 1$ for $x\notin[-M,
M]$,
\begin{equation*}\begin{split}
\left|
\int\big[f-f_M\big]d\mu_n^\theta
\right|
&\le\P\big[|\inprod{x_I}{\theta}|>M\big]\le\frac{1}{M^2}\E\big[
\inprod{x_I}{\theta}^2\big]\le\frac{B}{M^2}.
\end{split}\end{equation*}
Choosing $M$ such that $\frac{B}{M^2}=\frac{\epsilon}{4},$ it follows that for
$\epsilon^{5/2}>\frac{3\cdot 2^{6}\pi B}{\sqrt{d-1}}$,
\begin{equation*}\begin{split}
\P\left[\sup_{f\in\F_{BL}}\left|\int fd\mu_n^\theta-\E\int fd\mu_n^\theta\right|
>\epsilon\right]&\le\P\left[\sup_{f\in\F_{BL}}\left|\int f_Md\mu_n^\theta-\E\int f_M
d\mu_n^\theta\right|>\epsilon-\frac{2B}{M^2}\right]\\&\le\P\left[\sup_{g\in\F_{BL,
[-M-1,M+1]}}\left|\int gd\mu_n^\theta-\E\int g
d\mu_n^\theta\right|>\frac{\epsilon}{2}\right]\\&\le\frac{4\sqrt{2\pi}(M+1)}{
\epsilon}e^{-\frac{
(d-1)}{8B}\left(\frac{\epsilon^2}{16(M+1)}\right)^2}\\&\le
\frac{12\sqrt{2\pi B}}{
\epsilon^{3/2}}e^{-\frac{
(d-1)\epsilon^5}{9\cdot 2^{11}B^2}},
\end{split}\end{equation*}
assuming that $B\ge \epsilon$.

Recall that 
$\E \int fd\mu_n^\theta=\E f(W)$
for $W=\inprod{\theta}{x_I}$, and so by the bound \eqref{anneal2},
$$\sup_{f\in\F_{BL}}\left|\E\int fd\mu_n^\theta-\E f(\sigma Z)\right|
\le\frac{A+2}{d-1},$$
thus for $\epsilon$ bounded below as above and also satisfying 
$\epsilon>\frac{2(A+2)}{d-1},$ 
$$\P\left[d_{BL}(W,\sigma Z)>\epsilon
\right]\le\frac{48\sqrt{\pi B}}{
\epsilon^{3/2}}\exp\left[-\frac{
(d-1)\epsilon^5}{9\cdot 2^{16}B^2}\right].$$

\end{proof}

\bigskip

\begin{proof}[Proof of Corollary \ref{wait}]
The proof is essentially trivial.  Note that $$\P[T_\epsilon>m]\ge\left[1-
\frac{c_1\sqrt{B}}{\epsilon^{3/2}}\exp\left(\frac{c_2(d-1)\epsilon^5}{B^2}
\right)\right]^m$$ by independence of the $\theta_j$ and Theorem \ref{main},
since $T_\epsilon>m$ if and only if $d_{BL}(\mu_n^{\theta_j},\n(0,\sigma^2)\le
\epsilon$ for all $1\le j\le m$.  
This bound can be used in the identity $\E T_\epsilon=\sum_{m=0}^\infty\P[
T_\epsilon>m]$ to obtain the bound in the corollary.
\end{proof}

\bigskip

{\bf Remark:} One of the features of the proofs given above is that they
can be generalized to the case of $k$-dimensional projections of the 
$d$-dimensional data vectors $\{x_i\}$, with $k$ fixed or even growing
with $d$.  The proof of the higher-dimensional analog of Theorem 
\ref{testfcn} goes through essentially the same way.  
However, the analog of the proof of Theorem \ref{main} from Theorem 
\ref{testfcn} is rather 
more involved in the multivariate setting and will 
be the subject of a future paper.


\end{document}